\documentclass{amsart}

\newtheorem{fed}{\textbf{Definition}}[section]
\newtheorem{thm}[fed]{\textbf{Theorem}}
\newtheorem{lemma}[fed]{\textbf{Lemma}}

\newtheorem{rem}[fed]{\textbf{Remark}}
\newtheorem{prop}[fed]{\textbf{Proposition}}

\usepackage{amssymb, bbm,graphicx,epsfig,psfrag,epic,eepic,latexsym}
\usepackage{amsmath}
\usepackage{mathrsfs}
\usepackage[dvips]{color}

\DeclareMathOperator{\id}{Id}
\DeclareMathOperator{\sign}{sign}
\DeclareMathOperator{\Fix}{Fix}
\DeclareMathOperator{\Gr}{Gr}
\DeclareMathOperator{\Sp}{Sp}

\begin{document}
\title{The H\"ormander index of symmetric periodic orbits}
\author[Urs Frauenfelder]{Urs Frauenfelder}
\address{
Department of Mathematical Sciences, Seoul National University\\
Building 27, room 403\\
San 56-1, Sillim-dong, Gwanak-gu, Seoul, South Korea\\
Postal code 151-747
}
\email{frauenf@snu.ac.kr}

\author[Otto van Koert]{Otto van Koert}
\address{
Department of Mathematical Sciences, Seoul National University\\
Building 27, room 402\\
San 56-1, Sillim-dong, Gwanak-gu, Seoul, South Korea\\
Postal code 151-747
}
\email{okoert@snu.ac.kr}
\maketitle

\begin{abstract}
A symmetric periodic orbit is a special kind of periodic orbit that can also be regarded as a Lagrangian intersection point. Therefore it has two Maslov indices whose difference is the H\"ormander index. In this paper we provide a formula for the H\"ormander index of a symmetric periodic orbit and its iterates in terms of Chebyshev polynomials.
\end{abstract}

\section{Introduction}

Symmetric periodic orbits play a crucial rule in the restricted
three body problem \cite{birkhoff,darwin,henon}. In general, they can be defined for any Hamiltonian system invariant under an
antisymplectic involution.
Since the fixed point set of an antisymplectic involution is a Lagrangian submanifold $L$, symmetric periodic orbits
can be interpreted either as periodic orbits or as Lagrangian
intersection points. Therefore one can associate two
different Maslov indices with them, namely the Conley-Zehnder index if
interpreted as a periodic orbit, or the Lagrangian Maslov index if
interpreted as a Lagrangian intersection point. These two Maslov
indices can be obtained as the intersection number of a path of
symplectic matrices with two different but homologous Maslov cycles.
In particular, their difference is independent of the path and only
depends on the linearization of the Poincar\'e return map. This
difference of the two Maslov indices is the H\"ormander index. The
purpose of this paper is to give an explicit formula how to compute
the H\"ormander index of a symmetric periodic orbit and its iterates.\\
~\\

{\bf Acknowledgment.}
The first author was partially supported by the Basic
Research fund 2010-0007669 funded by the Korean government and the
second author by the NRF Grant 2012-011755 funded by the Korean government. Both authors also hold joint appointments in the Research Institute of Mathematics, Seoul National University.

\section{Definitions and results}
Assume that $(M,\omega)$ is a symplectic manifold and $\rho \in
\mathrm{Diff}(M)$ is an antisymplectic involution of $M$, i.e.\,
$$\rho^2=\id, \quad \rho^*\omega=-\omega.$$ Suppose that $H \in
C^\infty(M,\mathbb{R})$ is an autonomous Hamiltonian which is
invariant under the involution $\rho$, i.e.\,$$H \circ \rho=H.$$ In
particular, the Hamiltonian vector field of $H$ defined by the equation $dH=\omega(X_H, \cdot)$ satisfies $\rho^* X_H=-X_H$.
For $\eta \in \mathbb{R}$ denote by $\phi^\eta=\phi^\eta_{X_H}$ the time-$\eta$
flow of the Hamiltonian vector field. Note that because of the anti
invariance of the Hamiltonian vector field under the involution
$\rho$ we obtain
\begin{equation}\label{flow}
\phi^\eta_{X_H}=\phi^{-\eta}_{-X_H}=\phi^{-\eta}_{\rho^*X_H}=\rho
\phi^{-\eta}_{X_H} \rho.
\end{equation}
Denote by $\mathbb{R}_+$ the set of positive real numbers. A
\emph{periodic orbit} is a pair $(x,\eta) \in M \times \mathbb{R}_+$
satisfying $\phi^\eta(x)=x$. It follows from (\ref{flow}) that if
$(x,\eta)$ is a periodic orbit, then $(\rho(x),\eta)$ is a periodic
orbit as well. A periodic orbit $(x,\eta)$ is called
\emph{symmetric}, if $x=\rho(x)$, i.e.~$x$ lies in the fixed point
set $\mathscr{L}=\Fix(\rho)$ of the antisymplectic
involution $\rho$. The fixed point set of an antisymplectic
involution is a Lagrangian submanifold and hence a symmetric
periodic orbit $(x,\eta)$ also gives rise to a Lagrangian
intersection point $x \in \mathscr{L} \cap \phi^\eta \mathscr{L}$.

In the following let us assume that $(x,\eta)$ is a symmetric
periodic orbit satisfying $dH(x) \neq 0$, i.e.~$x$ is not a
critical point. Since the Hamiltonian $H$ is autonomous the energy
hypersurface $\Sigma=H^{-1}\big(H(x)\big)$ is invariant under the
flow of $X_H$. We further choose a symplectic subspace $V \subset
T_x\Sigma$ satisfying
\begin{description}
 \item[(i)] $V \oplus \langle X_H(x)\rangle=T_x \Sigma$.
 \item[(ii)] $V$ is $d\rho(x)$ invariant.
\end{description}
That such a subspace exists can be seen as follows. Since $x$ is not
a critical point of $H$ there exists $w \in T_x M$ satisfying
$dH(x)w \neq 0$. Set $v=w+d\rho(x) w$. Since $\rho$ is an
involution, we have $d\rho(x) v=v$. Because $H$ is invariant under
$\rho$ we have $dH(x)v=2dH(x)w \neq 0$. Choose $V=\langle v,X_H(x)
\rangle^\omega$. Since $V$ is symplectically orthogonal to $X_H(x)$
it is a codimension one subspace of $T_x \Sigma$ and therefore (i)
holds. Moreover, since $\langle v, X_H(x) \rangle$ is invariant
under $d\rho(x)$ its symplectic orthogonal complement is invariant
as well. We abbreviate
$$R=d\rho(x)|_{V} \colon V \to V.$$
Note that $R$ is a linear antisymplectic involution of the
symplectic vector space $(V,\omega)$. The linear antisymplectic
involution gives rise to a Lagrangian splitting $V=L_+ \times L_-$,
where $L_\pm$ are the eigenspaces of $R$ to the eigenvalue $\pm 1$.
Note that $L_+=T_x \mathscr{L} \cap T_x \Sigma$. Let $2n$ be the
dimension of $V$, i.e.\,the dimension of the original manifold $M$
is $2n+2$. Choose a basis $\{e_1,\ldots,e_n\}$ of $L_+$. Using the
Lagrangian splitting $V=L_+ \times L_-$ we can symplectically
identify $V$ with the cotangent bundle $T^* L_+$. Hence the basis
$\{e_1,\ldots,e_n\}$ uniquely determines a basis
$\{f_1,\ldots,f_n\}$ on $L_-$ such that
$\{e_1,\ldots,e_n,f_1,\ldots,f_n\}$ is a symplectic basis of $V$.
Using such a basis we identify $(V,\omega)$ with $\mathbb{R}^{2n}$
endowed with its standard symplectic structure. Under this
identification the Lagrangian subspaces become $L_+=\mathbb{R}^n
\times \{0\}$ and $L_-=\{0\} \times \mathbb{R}^n$ and the Lagrangian
splitting becomes the splitting $\mathbb{R}^{2n}=\mathbb{R}^n \times
\mathbb{R}^n$. We abbreviate the linearization of the Poincar\'e
return map by
$$\Phi=d\phi^\eta(x)|_{V} \colon V \to V.$$
With respect to the Lagrangian splitting we write
$$\Phi=\left(\begin{array}{cc}
A & B\\
C & D
\end{array}\right)$$
for $n \times n$-matrices $A,B,C,D$.
\begin{prop}\label{darwin}
The matrices $A,B,C,D$ satisfy
$$D=A^T, \quad B=B^T, \quad C=C^T, \quad AB=BA^T, \quad AC=CA^T,
\quad A^2-BC=\id.$$
\end{prop}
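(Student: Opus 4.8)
The plan is to extract everything from two structural facts about $\Phi$: that it is a symplectic automorphism of $(V,\omega)$, and that it satisfies the symmetry relation $\Phi=R\Phi^{-1}R$, where $R=d\rho(x)|_V$. Throughout I will use that, in the chosen symplectic basis, $R$ is the block matrix $\left(\begin{smallmatrix} \id & 0 \\ 0 & -\id \end{smallmatrix}\right)$: this is forced, since $L_+$ and $L_-$ are the $(\pm 1)$-eigenspaces of $R$ and were identified with $\mathbb{R}^n\times\{0\}$ and $\{0\}\times\mathbb{R}^n$ respectively.

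First I would establish the symmetry relation. Differentiating the identity $\phi^\eta=\rho\,\phi^{-\eta}\,\rho$ of (\ref{flow}) at the point $x$, and using that $\phi^\eta(x)=x=\phi^{-\eta}(x)$ by periodicity, $\rho(x)=x$ by symmetry, and $d\rho(x)X_H(x)=-X_H(x)$ as a consequence of $\rho^*X_H=-X_H$, one obtains $d\phi^\eta(x)=d\rho(x)\,(d\phi^\eta(x))^{-1}\,d\rho(x)$ on $T_xM$. Since $d\phi^\eta(x)$ preserves $T_x\Sigma$ and fixes $X_H(x)$, it induces an automorphism of the symplectic reduction $T_x\Sigma/\langle X_H(x)\rangle$; identifying this reduction with $V$, which is possible because $V$ is a symplectic complement to $\langle X_H(x)\rangle$ in $T_x\Sigma$ by property (i), the induced automorphism is precisely $\Phi$. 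As $d\rho(x)$ preserves $V$ by property (ii) and acts on $X_H(x)$ by $-1$, the displayed identity descends to $\Phi=R\Phi^{-1}R$ on $V$.

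The remaining steps are pure linear algebra with the block decomposition $\Phi=\left(\begin{smallmatrix} A & B \\ C & D \end{smallmatrix}\right)$. Since $\Phi$ is symplectic, its inverse is given by the standard formula $\Phi^{-1}=\left(\begin{smallmatrix} D^T & -B^T \\ -C^T & A^T \end{smallmatrix}\right)$, while $R\Phi R=\left(\begin{smallmatrix} A & -B \\ -C & D \end{smallmatrix}\right)$ by the shape of $R$. Comparing the four blocks in the identity $\Phi^{-1}=R\Phi R$ gives at once $D=A^T$, $B=B^T$ and $C=C^T$. I would then substitute these three relations back into the block identities expressing symplecticity ($\Phi^TJ_0\Phi=J_0$, together with the equivalent $\Phi J_0\Phi^T=J_0$, where $J_0$ is the standard symplectic matrix), and read off the diagonal and off-diagonal $n\times n$ blocks; this produces the three relations $A^2-BC=\id$, $AB=BA^T$ and $AC=CA^T$.

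The one step that is not mechanical is the first one: one must take care that the subspace $V$ chosen here need not be invariant under $d\phi^\eta(x)$, so that "$\Phi=d\phi^\eta(x)|_V$" has to be read as the reduced linearised return map on $V\cong T_x\Sigma/\langle X_H(x)\rangle$, and one must then check that the symmetry identity of (\ref{flow}) genuinely survives passage to this symplectic reduction. Once that is done, deriving $D=A^T$, the symmetry of $B$ and $C$, and the three quadratic identities is routine bookkeeping with $2\times 2$ block matrices.
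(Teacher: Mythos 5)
Your proposal is correct and follows essentially the same route as the paper: differentiate (\ref{flow}) at $x$ to obtain $\Phi=R\Phi^{-1}R$, use the block formula for the inverse of a symplectic matrix to read off $D=A^T$, $B=B^T$, $C=C^T$, and feed these back into the symplecticity relations (your extra care in interpreting $\Phi$ on $V\cong T_x\Sigma/\langle X_H(x)\rangle$ is a refinement of a step the paper leaves implicit). One caveat, shared with the printed statement itself: the block identities actually yield $A^TC=CA$ (equivalently, symmetry of $CA$), not $AC=CA^T$ (symmetry of $AC$), and it is the former identity that is used later in the paper, e.g.\ in the proof of Lemma~\ref{iterate}, so your final step should record $A^TC=CA$ rather than assert that $AC=CA^T$ drops out of the computation.
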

\begin{rem}
In the case that $n=1$, the proposition tells us that the matrix
$\Phi$ is of the form
$$\Phi=\left(\begin{array}{cc}
a & b\\
c & a
\end{array}\right), \quad a^2-bc=1$$
a fact which can be already found in the work of G.\,Darwin
\cite[p.\,146]{darwin}, see also \cite{henon}.
\end{rem}
Since a symmetric periodic orbit $(x,\eta)$ is a periodic orbit as
well as a Lagrangian intersection point it has a Conley-Zehnder
index $\mu_{CZ}(x,\eta)$ as well as a Lagrangian Maslov index
$\mu_L(x,\eta)$. The difference of these two indices is the
H\"ormander index
$$s(x,\eta)=\mu_{CZ}(x,\eta)-\mu_L(x,\eta).$$
Note that the iterates of a symmetric periodic orbit $(x,k\eta)$ for
$k \in \mathbb{N}$ are symmetric periodic orbits as well. We say
that a symmetric periodic orbit is \emph{nondegenerate} if for any
$k \in \mathbb{N}$ it holds that $\det(\Phi^k-\id)\neq 0$.
We further recall that the \emph{Chebyshev polynomials of the first
kind} are recursively defined by
\begin{eqnarray*}
T_0(x)&=&1\\
T_1(x)&=&x\\
T_{k+1}(x)&=&2xT_k(x)-T_{k-1}(x)
\end{eqnarray*}
while the \emph{Chebyshev polynomials of the second kind} are
defined by
\begin{eqnarray*}
U_0(x)&=&1\\
U_1(x)&=&2x\\
U_{k+1}(x)&=&2xU_k(x)-U_{k-1}(x).
\end{eqnarray*}
We are now able to formulate our main result
\begin{thm}
\label{main}
Suppose that $(x,\eta)$ is a nondegenerate, symmetric periodic orbit. Then the
H\"ormander indices of its iterates are given by the formula
$$
s(x,k\eta)=\frac{1}{2}\sign \Big((\id-T_k(A))
U_{k-1}(A)^{-1}C^{-1}\Big), \quad k \in \mathbb{N}.
$$ 
In particular,
$$
s(x,\eta)=\frac{1}{2}\sign\Big((\id-A)C^{-1}\Big).
$$
\end{thm}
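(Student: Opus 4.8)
The plan is to reduce the statement for general $k$ to the case $k=1$ by an explicit computation of the iterates $\Phi^{k}$, to recognise $s(x,\eta)$ as a H\"ormander index attached to the path of graphs of the linearized flow, and to evaluate that index by a crossing-form calculation.

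For the reduction I would argue as follows. Since $x=\rho(x)$ and $\phi^{\eta}(x)=x$, relation \eqref{flow} gives $d\phi^{\eta}(x)=d\rho(x)\,d\phi^{\eta}(x)^{-1}\,d\rho(x)$ on $T_{x}M$, which restricts on $V$ to $\Phi R\Phi=R$, i.e.\ $R\Phi R=\Phi^{-1}$; with $R=\mathrm{diag}(\id,-\id)$ this, together with $\Phi\in\Sp(V)$, is precisely Proposition~\ref{darwin}, and it implies $\Phi^{k}R\Phi^{k}=R$ for all $k$, so every iterate $(x,k\eta)$ has reduced return map $\Phi^{k}$ of the same shape. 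Setting $\widetilde A:=\mathrm{diag}(A,A^{T})$ one checks from $R\Phi R=\Phi^{-1}$ and $D=A^{T}$ that $\Phi+\Phi^{-1}=2\widetilde A$, and from $AB=BA^{T}$, $A^{T}C=CA$ that $\widetilde A$ commutes with $\Phi$; since moreover $\Phi-\Phi^{-1}=\left(\begin{smallmatrix}0&2B\\2C&0\end{smallmatrix}\right)$, the Chebyshev recursions give $\Phi^{k}+\Phi^{-k}=2T_{k}(\widetilde A)$ and $\Phi^{k}-\Phi^{-k}=U_{k-1}(\widetilde A)(\Phi-\Phi^{-1})$, hence
$$\Phi^{k}=T_{k}(\widetilde A)+\tfrac12U_{k-1}(\widetilde A)(\Phi-\Phi^{-1})=\left(\begin{array}{cc}T_{k}(A)&U_{k-1}(A)B\\ C\,U_{k-1}(A)&T_{k}(A)^{T}\end{array}\right),$$
using $U_{k-1}(A^{T})C=C\,U_{k-1}(A)$, which follows from $A^{T}C=CA$. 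So the $A$- and $C$-blocks of $\Phi^{k}$ are $T_{k}(A)$ and $C\,U_{k-1}(A)$, and the formula for $s(x,k\eta)$ is just the $k=1$ formula applied to $(x,k\eta)$. To justify this I would check that nondegeneracy forces $C$ and each $U_{k-1}(A)$ to be invertible: if $Cv=0$ then $\ker C$ is $A$-invariant with $A^{2}=\id$ there, producing a real eigenvector and hence a nonzero vector in $\ker(\Phi\mp\id)$ or $\ker(\Phi^{2}-\id)$; while a common zero of $U_{k-1}$ and the spectrum of $A$ would, via $\Phi+\Phi^{-1}=2\widetilde A$, force an eigenvalue $e^{\pm ij\pi/k}$ of $\Phi$. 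Symmetry of $(\id-T_{k}(A))\big(C\,U_{k-1}(A)\big)^{-1}$ is then the relation $A^{T}C=CA$ for the iterate $\Phi^{k}$.

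For the case $k=1$ I would work in $(V\oplus V,(-\omega)\oplus\omega)$ and attach to the path $\Phi_{t}:=d\phi^{t\eta}(x)|_{V}$, $t\in[0,1]$, the path of Lagrangian graphs $\Gamma_{t}=\{(v,\Phi_{t}v):v\in V\}$ from the diagonal $\Delta=\Gamma_{0}$ to $\Gamma_{1}$. By definition $\mu_{CZ}(x,\eta)$ is the Maslov index of $\{\Gamma_{t}\}$ relative to $\Delta$, and $\mu_{L}(x,\eta)$ is the Maslov index of $\{\Phi_{t}L_{+}\}$ in the Lagrangian Grassmannian of $V$ relative to $L_{+}$, which coincides with the Maslov index of $\{\Gamma_{t}\}$ relative to $L_{+}\oplus L_{+}$ since $\dim(\Gamma_{t}\cap(L_{+}\oplus L_{+}))=\dim(\Phi_{t}L_{+}\cap L_{+})$. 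Therefore $s(x,\eta)=\mu(\{\Gamma_{t}\},\Delta)-\mu(\{\Gamma_{t}\},L_{+}\oplus L_{+})$, which depends only on the endpoints $\Delta,\Gamma_{1}$ and on the reference Lagrangians: it is the H\"ormander index $s(\Delta,\Gamma_{1};L_{+}\oplus L_{+},\Delta)$, and $\{\Phi_{t}\}$ may be replaced by any path of symplectic matrices from $\id$ to $\Phi$.

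It remains to evaluate this H\"ormander index, and this is the real work. Nondegeneracy gives $C$ invertible and $\ker(\Phi-\id)=\ker(\Phi^{2}-\id)=0$, so $\Gamma_{1}$ is transverse to $L_{+}\oplus L_{+}$ while $\Delta$ is not, and one is in the stratified situation handled by the crossing-form description of the Maslov index (Robbin--Salamon). I would choose the auxiliary path from $\id$ to $\Phi$ to run through the symplectic matrix $\left(\begin{smallmatrix}0&-C^{-1}\\ C&0\end{smallmatrix}\right)$, which satisfies $\Psi R\Psi=R$, has no eigenvalue $1$, and carries $L_{+}$ onto $L_{-}$, so that on the first leg $\Phi_{t}L_{+}$ leaves the Maslov cycle of $L_{+}$ and stays off it while on the second leg the crossings are controllable; equivalently, one feeds $s(\Delta,\Gamma_{1};L_{+}\oplus L_{+},\Delta)$ into H\"ormander's signature formula for four Lagrangians. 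Either way the answer emerges as the signature of an $n\times n$ symmetric matrix which the relations $D=A^{T}$, $A^{T}C=CA$, $A^{2}-BC=\id$ identify, up to congruence, with $(\id-A)C^{-1}$, giving $s(x,\eta)=\tfrac12\sign\big((\id-A)C^{-1}\big)$. I expect the main obstacle to be precisely this last step: arranging the computations of $\mu_{CZ}$ and $\mu_{L}$ along one common path so that their crossing contributions assemble into a single well-defined signature, and pinning down the normalization --- the factor $\tfrac12$ and the appearance of $C^{-1}$ rather than $C$ or $(\id-A)^{-1}$ --- by matching the crossing-form conventions for the two indices at the non-transverse endpoint $t=0$. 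The reduction to $k=1$ and the Chebyshev identity, by contrast, are routine.
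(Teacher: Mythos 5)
Your reduction to $k=1$ is sound and is in fact a pleasant alternative to the paper's argument: from $D=A^{T}$, $B=B^{T}$, $C=C^{T}$ you get $\Phi+\Phi^{-1}=2\widetilde A$ with $\widetilde A=\mathrm{diag}(A,A^{T})$ commuting with $\Phi$, and the Chebyshev identities then reproduce exactly the iteration formula of Lemma~\ref{iterate}, which the paper proves instead by induction using the mutual recursions of Lemma~\ref{lemma:mutual_recursion}. Your invertibility arguments also work: the kernel argument for $C$ is essentially Lemma~\ref{invert}, and your spectral argument for $U_{k-1}(A)$ is a legitimate substitute for the paper's shortcut of simply applying Lemma~\ref{invert} to the iterate $(x,k\eta)$, whose $C$-block is $CU_{k-1}(A)$. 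Identifying $\mu_{CZ}-\mu_{L}$ with a H\"ormander index of the four Lagrangians $\Delta$, $\Gr(\Phi)$, $L\times L$ is likewise the paper's route via \cite[Theorem 3.5]{robbin-salamon}.

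The genuine gap is the central evaluation, which you explicitly defer: you assert that ``the answer emerges as the signature of an $n\times n$ symmetric matrix'' congruent to $(\id-A)C^{-1}$, while admitting you cannot yet pin down the factor $\tfrac12$, the overall sign, or why $C^{-1}$ rather than $C$ appears --- but that is precisely the content of the theorem. The paper settles it by a short explicit computation (Lemma~\ref{mainlem}): by \cite[Formula 2.10]{duistermaat} the H\"ormander index equals $\tfrac12\big(\sign Q(\Delta,\Gr(\Phi);L\times L)-\sign Q(\Delta,\Gr(\Phi);\Delta)\big)$, where a reference Lagrangian $W$ is written as $W=\{z+\Gamma z:z\in\Delta\}$ with $\Gamma\colon\Delta\to\Gr(\Phi)$ (possible because $\det(\Phi-\id)\neq0$) and $Q(z,z')=\Omega(z,\Gamma z')$; the form for $W=\Delta$ vanishes, and for $W=L\times L$ solving the defining condition gives $v(u)=\big((A-\id)C^{-1}u_{2},-u_{2}\big)$ and $Q(z,z')=2\big\langle u_{2},(A-\id)C^{-1}u_{2}'\big\rangle$, hence signature $\sign\big((A-\id)C^{-1}\big)$, and the antisymmetry $s(L\times L,\Delta;\Delta,\Gr(\Phi))=-s(\Delta,\Gr(\Phi);L\times L,\Delta)$ produces the final $\tfrac12\sign\big((\id-A)C^{-1}\big)$. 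Nothing in your sketch supplies this: the auxiliary path through $\left(\begin{smallmatrix}0&-C^{-1}\\ C&0\end{smallmatrix}\right)$ is never analyzed, and the convention issues you flag are not harmless --- your four-tuple $s(\Delta,\Gamma_{1};L_{+}\oplus L_{+},\Delta)$ and the paper's $s(L\times L,\Delta;\Delta,\Gr(\Phi))$ differ by exactly such an antisymmetry sign, which would flip the answer. So the heart of the proof --- why the signature is that of $(\id-A)C^{-1}$, with that sign and that normalization --- is missing from the proposal.
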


\section{The proof}
\subsection{Proof of Proposition~\ref{darwin}}

Since $\Phi$ is a symplectic matrix its inverse is given by, see
\cite[p.\,20]{mcduff-salamon},
$$\Phi^{-1}=\left(\begin{array}{cc}
D^T & -B^T\\
-C^T & A^T
\end{array}
\right).$$ In particular, it holds that
\begin{equation}\label{symp}
A^T C=C^T A, \quad B^T D=D^T B, \quad A^TD-C^T B=\id.
\end{equation}
Differentiating (\ref{flow}) we obtain
$$\Phi=R \Phi^{-1} R.$$
Note that
$$R=\left(\begin{array}{cc}
\id & 0 \\
0 &-\id
\end{array}\right).$$
Hence we obtain
\begin{equation}\label{darwin1}
A=D^T, \quad B=B^T, \quad C=C^T.
\end{equation}
Equations (\ref{symp}) and (\ref{darwin1}) imply the proposition.
\hfill $\square$
\subsection{An iteration formula}
In this subsection we prove an iteration formula for the matrix
$\Phi$ which allows us to reduce the proof of Theorem~\ref{main} to
the case $k=1$.
\begin{lemma}\label{iterate}
\label{lemma:iteration_formula}
For $k \in \mathbb{N}$ the $k$-th iterate of $\Phi$ satisfies
$$\Phi^k=\left(\begin{array}{cc}
T_k(A) & U_{k-1}(A)B\\
C U_{k-1}(A) & T_k(A^T)
\end{array}\right).$$
\end{lemma}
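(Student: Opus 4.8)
My plan is to argue by induction on $k$, organized so that the Chebyshev recursion appears by itself rather than having to be checked entry by entry. First I would compute $\Phi^{-1}$ from the formula for the inverse of a symplectic matrix (recalled in the proof of Proposition~\ref{darwin}) together with the symmetries $D^{T}=A$, $B^{T}=B$, $C^{T}=C$ from that proposition; this gives
\[
\Phi^{-1}=\begin{pmatrix} A & -B\\ -C & A^{T}\end{pmatrix},
\qquad\text{so that}\qquad
\Phi+\Phi^{-1}=2\mathcal{A},\quad \mathcal{A}:=\begin{pmatrix} A & 0\\ 0 & A^{T}\end{pmatrix}.
\]
Multiplying $\Phi+\Phi^{-1}=2\mathcal{A}$ by $\Phi$ on the left and on the right yields $\Phi^{2}+\id=2\Phi\mathcal{A}=2\mathcal{A}\Phi$, so $\mathcal{A}$ commutes with $\Phi$ and $\Phi^{2}=2\mathcal{A}\Phi-\id$. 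Multiplying the latter by $\Phi^{k-1}$ gives the three-term recursion $\Phi^{k+1}=2\mathcal{A}\Phi^{k}-\Phi^{k-1}$, which is precisely the recursion defining the Chebyshev polynomials, now with the matrix $\mathcal{A}$ playing the role of the variable.

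From here the closed form is routine. With the convention $U_{-1}:=0$ one checks directly that
\[
\Phi^{k}=T_{k}(\mathcal{A})+U_{k-1}(\mathcal{A})\,(\Phi-\mathcal{A})
\]
holds for $k=0$ and $k=1$, and the inductive step follows by substituting this into $\Phi^{k+1}=2\mathcal{A}\Phi^{k}-\Phi^{k-1}$ and invoking the defining recursions $T_{k+1}=2xT_{k}-T_{k-1}$ and $U_{k+1}=2xU_{k}-U_{k-1}$ — every term in sight is a polynomial in $\mathcal{A}$, so there is no ordering issue. Since $\mathcal{A}$ is block diagonal, $T_{k}(\mathcal{A})=\mathrm{diag}\big(T_{k}(A),T_{k}(A^{T})\big)$ and $U_{k-1}(\mathcal{A})=\mathrm{diag}\big(U_{k-1}(A),U_{k-1}(A^{T})\big)$, while $\Phi-\mathcal{A}=\left(\begin{smallmatrix}0 & B\\ C & 0\end{smallmatrix}\right)$; multiplying these out gives
\[
\Phi^{k}=\begin{pmatrix} T_{k}(A) & U_{k-1}(A)B\\ U_{k-1}(A^{T})C & T_{k}(A^{T})\end{pmatrix},
\]
so it only remains to replace the lower-left block $U_{k-1}(A^{T})C$ by $C\,U_{k-1}(A)$.

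That last replacement is the intertwining relation $p(A^{T})\,C=C\,p(A)$, valid for every polynomial $p$, obtained by iterating the single identity $A^{T}C=CA$; and $A^{T}C=CA$ is nothing but the lower-left block of the already established identity $\mathcal{A}\Phi=\Phi\mathcal{A}$ (equivalently, it is the symplectic relation $A^{T}C=C^{T}A$ from \eqref{symp} together with $C=C^{T}$). I expect the only real obstacle to be the first step: spotting that Proposition~\ref{darwin} secretly says $\Phi+\Phi^{-1}=2\,\mathrm{diag}(A,A^{T})$, after which the commutation of $\mathcal{A}$ with $\Phi$, the Chebyshev recursion, and hence the whole formula fall out almost for free; the remainder is standard bookkeeping with Chebyshev polynomials. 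One can also anticipate the closed form from the scalar identity $\lambda^{k}=T_{k}(a)+U_{k-1}(a)(\lambda-a)$ relating an eigenvalue $\lambda=e^{\pm i\theta}$ of $\Phi$ to the eigenvalue $a=\cos\theta$ of $\mathcal{A}$, but the induction above makes this rigorous without any concern about diagonalizability.
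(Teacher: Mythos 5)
Your argument is correct, and it takes a genuinely different route from the paper. The paper proves the lemma by a direct block-wise induction: it multiplies $\Phi$ into the claimed form of $\Phi^k$ and simplifies each of the four blocks using the identities $A^2-\id=BC$, $AB=BA^T$, $A^TC=CA$ together with the mutual recursion formulas $T_{k+1}(x)=xT_k(x)-(1-x^2)U_{k-1}(x)$ and $U_k(x)=xU_{k-1}(x)+T_k(x)$ proved in the appendix (Lemma~\ref{lemma:mutual_recursion}). You instead compress the content of Proposition~\ref{darwin} into the single identity $\Phi+\Phi^{-1}=2\mathcal{A}$ with $\mathcal{A}=\mathrm{diag}(A,A^T)$ — note that the block form of $\Phi^2=2\mathcal{A}\Phi-\id$ is exactly equivalent to the relations $A^2-\id=BC$, $AB=BA^T$, $A^TC=CA$ used by the paper — deduce that $\mathcal{A}$ commutes with $\Phi$ and that the powers of $\Phi$ obey the three-term Chebyshev recursion in the ``variable'' $\mathcal{A}$, and then get the closed form $\Phi^k=T_k(\mathcal{A})+U_{k-1}(\mathcal{A})(\Phi-\mathcal{A})$ by an induction that only uses the defining recursions; reading off blocks and applying the intertwining $p(A^T)C=Cp(A)$ (iterated from $A^TC=CA$, i.e.\ the lower-left block of $\mathcal{A}\Phi=\Phi\mathcal{A}$) finishes the proof. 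What each approach buys: the paper's computation is short and self-contained given the appendix lemma, while yours is more conceptual — it explains structurally why Chebyshev polynomials appear (the matrix analogue of $\lambda+\lambda^{-1}=2\cos\theta$), avoids the mutual recursion formulas and any entry-by-entry bookkeeping, and isolates the one commutation fact that makes everything work. All the individual steps you cite (the symplectic inverse formula, the symmetries $D=A^T$, $B=B^T$, $C=C^T$, the two base cases $k=0,1$, and the right-factoring of $(\Phi-\mathcal{A})$ in the inductive step) check out.
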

\begin{proof}
We show that the entries of the iterates $\Phi^k$ satisfy the mutual recursion formula for Chebyshev polynomials as in Lemma~\ref{lemma:mutual_recursion}, namely
\[
\begin{split}
T_{k+1}(A)&= x T_k(A)-(1-x^2)U_{k-1}(A) \\
U_{k}(A) &=A U_{k-1}(A)+T_k(A).
\end{split}
\]
We proceed by induction. The claim holds for $k=1$.
For the induction step, we compute
\[
\begin{split}
\left(\begin{array}{cc}
A & B\\
C & A^T
\end{array}\right)^{k+1}
&
=
\left(\begin{array}{cc}
A & B\\
C & A^T
\end{array}\right)
\left(\begin{array}{cc}
T_k(A) & U_{k-1}(A)B\\
C U_{k-1}(A) & T_k(A^T)
\end{array}\right) \\
&
=
\left(\begin{array}{cc}
A T_k(A)+ BC U_{k-1}(A) & A U_{k-1}(A)B+ B T_k(A^T)  \\
C T_k(A)+ A^T C U_{k-1}(A) & C U_{k-1}(A) B   +   A^T T_k(A^T)
\end{array}
\right) \\
&=
\left(\begin{array}{cc}
A T_k(A)-(\id-A^2) U_{k-1}(A) & A U_{k-1}(A)B+ T_k(A)B  \\
C T_k(A)+ C A U_{k-1}(A) & -(\id-(A^T)^2) U_{k-1}(A^T)    +   A^T T_k(A^T)
\end{array}\right)
.
\end{split}
\]
In the last step, we have used the identities, $A^2-\id=BC$, $BA^T=AB$ and $A^TC=CA$.
\end{proof}

\subsection{Nondegeneracy}

In this subsection we prove that the assumption that the symmetric
periodic orbit is nondegenerate guarantees that the formula in
Theorem~\ref{main} is well defined, namely

\begin{lemma}\label{invert}
Assume that $(x,\eta)$ is a nondegenerate symmetric periodic orbit.
Then $C$ is invertible.
\end{lemma}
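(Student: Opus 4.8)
The plan is to prove the contrapositive: if $C$ is not invertible, then the orbit is degenerate, i.e.\ $\det(\Phi^k - \id) = 0$ for some $k \in \mathbb{N}$. Suppose $C$ is singular, so there is a nonzero vector $v \in \mathbb{R}^n$ with $Cv = 0$. The geometric meaning is that the subspace $\{0\} \times \ker C \subset \mathbb{R}^{2n}$ interacts with $L_- = \{0\} \times \mathbb{R}^n$ in a way that will produce a fixed vector of some iterate. The key algebraic observation I would use is that from Proposition~\ref{darwin} we have $A^2 - BC = \id$, so on $\ker C$ we get $A^2 v = v$; hence $Av$ lies in the $(\pm 1)$-eigenspace decomposition. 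Also $AC = CA^T$ gives $CA^T v = ACv = 0$, so $A^T$ preserves $\ker C$ as well (replacing $v$ by $A^T v$ keeps us in $\ker C$).

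Concretely, I would split into cases according to whether $Av = v$ or $Av = -v$ is possible, but more efficiently: since $A^2 = \id$ on $\ker C$, the restriction $A|_{\ker C}$ is an involution, so $\ker C$ decomposes into $+1$ and $-1$ eigenspaces of $A$. If there is a nonzero $v \in \ker C$ with $Av = v$, then using Lemma~\ref{iterate}, the vector $(v, 0)^T$ satisfies $\Phi(v,0)^T = (Av, Cv)^T = (v,0)^T$, so already $\det(\Phi - \id) = 0$ and the orbit is degenerate with $k=1$. If instead every nonzero vector of $\ker C$ satisfies $Av = -v$, then $T_k(A)v = T_k(-1)v = (-1)^k v$ and $U_{k-1}(A)v = U_{k-1}(-1)v = (-1)^{k-1}k\, v$ (using the standard values $T_k(-1)=(-1)^k$, $U_{k-1}(-1)=(-1)^{k-1}k$). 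Taking $k=2$, the iterate $\Phi^2$ has block form $\begin{pmatrix} T_2(A) & U_1(A)B \\ C U_1(A) & T_2(A^T)\end{pmatrix}$, and $\Phi^2(v,0)^T = (T_2(A)v, CU_1(A)v)^T = (v, 2CAv)^T = (v, -2Cv)^T = (v,0)^T$ since $T_2(-1)=1$. Hence $\det(\Phi^2 - \id)=0$, and the orbit is degenerate with $k=2$.

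Putting the two cases together: $C$ singular forces a nonzero common fixed vector of the form $(v,0)^T$ for $\Phi$ or for $\Phi^2$, contradicting nondegeneracy. Therefore a nondegenerate symmetric periodic orbit has invertible $C$. The main obstacle I anticipate is being careful that the eigenvector analysis of $A|_{\ker C}$ is exhaustive — one must make sure that $A$ genuinely restricts to an involution on $\ker C$, which is exactly where the relation $AC = CA^T$ (equivalently that $\ker C$ is $A$-invariant, via $C(A v) $... ) needs to be checked; in fact the cleaner route is to note $A(\ker C)\subseteq \ker C$ follows because for $v\in\ker C$, $A^2 v = v + BCv = v$, so $A$ is invertible on $\ker C$ with $A^{-1}|_{\ker C} = A|_{\ker C}$, forcing $A(\ker C) = \ker C$. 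Everything else is the short explicit computation with Chebyshev values at $-1$ via Lemma~\ref{iterate}.
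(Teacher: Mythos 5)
Your overall strategy---producing a fixed vector of $\Phi$ or of $\Phi^2$ of the form $(v,0)$ out of a kernel vector of $C$---is the same as the paper's, and your two case computations (the $k=1$ case with $Av=v$ and the $k=2$ case with $Av=-v$, via $T_2(-1)=1$) are correct. The genuine gap is in the step that makes your case analysis exhaustive: you need $A(\ker C)\subseteq\ker C$ so that $A|_{\ker C}$ is an involution and $\ker C$ splits into $(\pm1)$-eigenspaces, and neither of your justifications establishes this. The computation you quote from $AC=CA^T$ gives $C(A^Tv)=ACv=0$, i.e.\ invariance of $\ker C$ under $A^T$, not under $A$; and your ``cleaner route'' is a non sequitur: knowing $A^2v=v$ for all $v\in\ker C$ does not force $A(\ker C)=\ker C$. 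For instance, for the swap matrix $A=\left(\begin{smallmatrix}0&1\\1&0\end{smallmatrix}\right)$ and the subspace $K$ spanned by $e_1$ one has $A^2w=w$ for all $w\in K$ and $A^{-1}=A$, yet $AK\not\subseteq K$. Without the invariance, a vector of $\ker C$ need not decompose into $(\pm1)$-eigenvectors lying in $\ker C$, so the dichotomy ``either some $v\in\ker C$ has $Av=v$, or every $v\in\ker C$ has $Av=-v$'' is not justified.

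The fact you need is true, but it comes from symplecticity rather than from the identities of Proposition~\ref{darwin} that you invoke: equation \eqref{symp} gives $A^TC=C^TA$, so with $C=C^T$ one obtains $CA=A^TC$, hence $C(Av)=A^T(Cv)=0$ for $v\in\ker C$. (This is genuinely not a formal consequence of $AC=CA^T$ and $C=C^T$ alone; one can write down non-symplectic block matrices satisfying all relations of Proposition~\ref{darwin} but not $CA=A^TC$.) With that one line added, your argument closes and is essentially parallel to the paper's proof, which sidesteps the eigenspace decomposition entirely: it tests the single vector $w=\bigl((\id+A)v,0\bigr)$ against $\Phi-\id$, using exactly the identity $CAv=A^TCv$, concludes $Av=-v$ from nondegeneracy at $k=1$, and only then passes to $\Phi^2$.
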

\textbf{Proof: } It suffices to show that $C$ is injective. Let us
assume that $v$ lies in the kernel of $C$, i.e. $Cv=0$. Since
$A^2-BC=\id$ we conclude that $A^2 v=v$. We first check that
$$w:=\left(\begin{array}{c}
Av+v\\
0
\end{array}\right) \in \ker(\Phi-\id).$$
To see that we compute
\begin{eqnarray*}
(\Phi-\id)w&=&\left(\begin{array}{cc} A-\id & B\\
C & D-\id \end{array}\right)\left(\begin{array}{c} Av+v\\
0
\end{array}\right)
=\left(\begin{array}{c} A^2 v+Av-Av-v\\
CAv+Cv
\end{array}\right)\\
&=&\left(\begin{array}{c} v+Av-Av-v\\
A^TCv+Cv
\end{array}\right)=0.
\end{eqnarray*}
Hence $w \in \ker(\Phi-\id)$ and since the symmetric
periodic orbit is nondegenerate we conclude that
$$Av=-v.$$
We claim that this implies that
$$
z=\left(\begin{array}{c}
v \\
0
\end{array}\right) \in \ker(\Phi^2-\id).
$$
Indeed,
\begin{eqnarray*}
(\Phi^2-\id)z&=&2\left(\begin{array}{cc} A^2-\id &
AB\\
CA & (A^T)^2-\id
\end{array}\right)\left(\begin{array}{c}
v\\ 0
\end{array}\right)=2\left(\begin{array}{c}
A^2 v-v\\
CAv \end{array}\right)\\&=&2\left(\begin{array}{c} v-v\\
-Cv
\end{array}\right)=0.
\end{eqnarray*}
Since the symmetric orbit is nondegenerate this implies that $z$ vanishes and
therefore $v$ is zero as well. This proves that $C$ is injective and
the lemma follows. \hfill $\square$
\subsection{Proof of Theorem~\ref{main}}

Let $(V,\omega)$ be a symplectic vector space, and denote the Lagrangian Grassmannian of $V$ by
$\mathcal{L}=\mathcal{L}(V)$.
This space is the manifold consisting of all Lagrangian subspaces of $V$.
The Maslov index associates a half integer \cite{robbin-salamon} with any two paths $\Lambda, \Lambda' \colon [0,1] \to \mathcal{L}$.
We denote this index by
$$
\mu(\Lambda,\Lambda') \in \frac{1}{2}\mathbb{Z}.
$$
If $\Psi \colon [0,1] \to \Sp(V)$ is a path of linear
symplectic transformations of $V$ satisfying $\Psi(0)=\id$
and $\det(\Psi(1)-\id) \neq 0$, then the
Conley-Zehnder index associated to this path is defined as
$$\mu_{CZ}(\Psi)=\mu(\Gr(\Psi),\Delta)$$
where $\Gr(\Psi)$ is the path in the Lagrangian Grassmannian
of $\big(V \times V, (-\omega) \times \omega\big)$ obtained from the
graph of $\Psi$ and $\Delta \subset V \times V$ is the diagonal. If
$L \in \mathcal{L}(V)$ is a Lagrangian subspace of $V$, then the
Lagrangian Maslov index of $\Psi$ with respect to $L$ can be defined
as (see \cite[Theorem 3.2]{robbin-salamon})
$$\mu_L(\Psi)=\mu(\Gr(\Psi),L \times L).$$
Abbreviate $\Phi=\Psi(1)$. 
According to \cite[Theorem
3.5]{robbin-salamon} the H\"ormander index can be defined by
$$
s\big(L \times
L,\Delta;\Delta,\Gr(\Phi)\big)=\mu_{CZ}(\Psi)-\mu_L(\Psi).
$$
It is also shown in \cite[Theorem
3.5]{robbin-salamon} that this index only depends on the endpoints of the path $\Psi$. 
For symmetric orbits, the H\"ormander index can be computed with the following lemma.
\begin{lemma}\label{mainlem}
Assume $\Phi$ satisfies the conditions of Proposition~\ref{darwin}
with $C$ invertible. Then
$$s\big(L \times
L,\Delta;\Delta,\Gr(\Phi)\big)=
\frac{1}{2}\sign\Big((\id-A)C^{-1}\Big)$$ where
$L=\mathbb{R}^n \times \{0\} \subset \mathbb{R}^{2n}=V$.
\end{lemma}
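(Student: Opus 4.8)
The plan is to evaluate the H\"ormander index as $\mu\big(\Gr\Psi,\Delta\big)-\mu\big(\Gr\Psi,L\times L\big)$ for a conveniently chosen path $\Psi\colon[0,1]\to\Sp(V)$ with $\Psi(0)=\id$ and $\Psi(1)=\Phi$; by \cite[Theorem~3.5]{robbin-salamon} this difference is independent of $\Psi$. First one checks that the right hand side makes sense: since $C=C^{T}$ and $A^{T}C=CA$ by Proposition~\ref{darwin}, the matrices $AC^{-1}$ and hence $(\id-A)C^{-1}$ are symmetric, and the congruence $x\mapsto Cx$ gives $\sign\big((\id-A)C^{-1}\big)=\sign\big(C(\id-A)\big)$ with $C(\id-A)=C-CA$ again symmetric; it is the form $\tfrac12\sign\big(C(\id-A)\big)$ in which the answer will appear.

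Put $P:=AC^{-1}$, a symmetric matrix. Using $B=(A^{2}-\id)C^{-1}$ and $CP=A^{T}$ one checks the factorisation $\Phi=N(P)\,W(C)\,N(P)$, where $N(P)=\left(\begin{smallmatrix}\id & P\\ 0 & \id\end{smallmatrix}\right)$ fixes $L=\mathbb{R}^{n}\times\{0\}$ pointwise and $W(C)=\left(\begin{smallmatrix}0 & -C^{-1}\\ C & 0\end{smallmatrix}\right)$ satisfies $W(C)^{2}=-\id$ and maps $L$ to the complementary Lagrangian $\{0\}\times\mathbb{R}^{n}$. Since $N(-P)$ preserves $L$, conjugating by it leaves $\Delta$ and $L\times L$ fixed, hence preserves the H\"ormander index, and we may replace $\Phi$ by $\Phi_{0}:=N(-P)\,\Phi\,N(P)=W(C)\,N(2P)$. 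For $\Phi_{0}$ take $\Psi$ to be the rotation $t\mapsto\cos\pi t\,\id+\sin\pi t\,W(C)$, $t\in[0,\tfrac12]$, followed by the shear $t\mapsto W(C)\,N\big(2(2t-1)P\big)$, $t\in[\tfrac12,1]$. At the junction $\Psi=W(C)$ has no eigenvalue $1$ and $W(C)L$ is transverse to $L$, so only $t=0$ and $t=1$ are crossings for either reference Lagrangian.

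Now apply the crossing form calculus of \cite{robbin-salamon}. For $\mu(\Gr\Psi,L\times L)$ there is a single crossing, at $t=0$, with crossing form on $L$ a positive multiple of $C$, contributing $\tfrac12\sign C$ (along the rotation segment $\Psi(t)L$ is the graph of $\tan\pi t\,C$, transverse to $L$ since $C$ is invertible; along the shear segment $\Psi(t)L=W(C)L$ is constant and transverse to $L$). For $\mu(\Gr\Psi,\Delta)$ the crossings are the times at which $\Psi(t)$ has eigenvalue $1$: at $t=0$, where the crossing form on $V$ is a positive multiple of $\left(\begin{smallmatrix}C & 0\\ 0 & C^{-1}\end{smallmatrix}\right)$, contributing $\tfrac12\big(\sign C+\sign C^{-1}\big)=\sign C$; nowhere on the rotation segment, whose eigenvalues $e^{\pm i\pi t}$ avoid $1$; and on the shear segment at each $t=\tfrac12(1+1/\lambda)$ with $\lambda\ge 1$ a real eigenvalue of $A$, where a short computation identifies the crossing form on $\ker(\Psi(t)-\id)=\{(u,-Cu):u\in E_{\lambda}(A)\}$ with $-4\lambda$ times the restriction of $C$ to $E_{\lambda}(A)=\ker(A-\lambda\id)$. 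Weighting the interior crossings by $1$ and the endpoints by $\tfrac12$,
$$
\mu\big(\Gr\Psi,\Delta\big)-\mu\big(\Gr\Psi,L\times L\big)=\tfrac12\sign C-\sum_{\lambda>1}\sign\big(C|_{E_{\lambda}(A)}\big)-\tfrac12\sign\big(C|_{E_{1}(A)}\big),
$$
the sum over real eigenvalues $\lambda>1$ of $A$. Finally, $A^{T}C=CA$ says $A$ is self-adjoint for the nondegenerate symmetric form $q$ with Gram matrix $C$; thus $\mathbb{R}^{n}$ splits $q$-orthogonally into the generalized eigenspaces of $A$, the form $q$ has signature zero on the subspaces attached to nonreal eigenvalues, and on the generalized $\lambda$-eigenspace with $\lambda\neq 1$ the forms $q(\,\cdot\,,(\id-A)\,\cdot\,)$ and $\sign(1-\lambda)\,q$ have equal signature (interpolate along $(1-\lambda)\id-tN$, $N$ the nilpotent part of $A$). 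Reading $\sign\big(C(\id-A)\big)$ off this decomposition and comparing with the displayed identity gives the lemma.

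The end linear algebra is routine; the real obstacle is the crossing form bookkeeping. When $A$ is not semisimple the crossings on the shear segment degenerate --- the form $C$ vanishes on the honest eigenspace inside a nontrivial Jordan block --- so one must perturb $\Psi$ with endpoints held fixed to make all crossings regular and check that the split contributions still sum as above, using the canonical form of $C$ on each generalized eigenspace; as only invertibility of $C$ is assumed, the eigenvalue $1$ of $A$ is allowed and needs the same treatment at $t=1$. One also has to fix the sign and normalisation conventions of \cite{robbin-salamon} at the outset and keep the half-weights at $t=0,1$ straight; the scalar case $n=1$, where $\Phi=\left(\begin{smallmatrix}a & b\\ c & a\end{smallmatrix}\right)$ with $a^{2}-bc=1$ and the formula reads $\tfrac12\sign\tfrac{1-a}{c}$, is a convenient check of these normalisations.
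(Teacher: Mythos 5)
Your route is genuinely different from the paper's. The paper never chooses a path of symplectic matrices: it uses the antisymmetry of the H\"ormander index from \cite[Theorem 3.5]{robbin-salamon} and then Duistermaat's algebraic formula, writing $s\big(\Delta,\Gr(\Phi);L\times L,\Delta\big)$ as half the difference of signatures of two explicit quadratic forms on $\Delta$; the form associated to $\Delta$ vanishes and the one associated to $L\times L$ is computed in a few lines to be $2\langle u_2,(A-\id)C^{-1}u_2'\rangle$, so no genericity, no spectral theory and no crossing analysis is needed. Your construction is sound in outline: the factorization $\Phi=N(P)W(C)N(P)$ with $P=AC^{-1}$ is correct (using $C=C^T$, $A^TC=CA$, $BC=A^2-\id$), the conjugation by $N(-P)$ does preserve the four Lagrangians up to a symplectomorphism of $(V\times V,\Omega)$, and on the shear segment the crossings and the crossing form $-4\lambda\,C|_{E_\lambda(A)}$ on $\{(u,-Cu):u\in E_\lambda(A)\}$ check out. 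In the case where $A$ is semisimple (so that $C$ is nondegenerate on each real eigenspace) your count together with the $C$-self-adjointness of $A$ does reproduce $\tfrac12\sign\big((\id-A)C^{-1}\big)$.

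The gap is the non-semisimple case, and it is not just deferred bookkeeping: there your displayed identity is false as written, not merely unproven. The hypotheses only assume $C$ invertible, and $A^TC=CA$ forces $C$ to be anti-triangular on each Jordan block, so whenever $\lambda$ carries a nontrivial block the crossing form $-4\lambda\,C|_{E_\lambda(A)}$ is degenerate --- for a single block it vanishes identically --- and the Robbin--Salamon crossing-form formula does not apply at that crossing. Moreover the correct contribution is governed by the generalized eigenspace, not the eigenspace: for one $3\times 3$ Jordan block with eigenvalue $\lambda>1$ your sum $\sum_{\lambda>1}\sign\big(C|_{E_\lambda(A)}\big)$ contributes $0$, while matching $\tfrac12\sign\big(C(\id-A)\big)$ (and your own final decomposition step) requires the contribution $-\sign\big(C|_{V_\lambda}\big)=\mp 1$ on the generalized eigenspace $V_\lambda$. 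Such $A$ are not excluded by nondegeneracy of the orbit (which only rules out $T_k(\lambda)=1$), so the perturbation-rel-endpoints argument you wave at is where the lemma would actually be proved in this case, and it must be carried out so as to produce generalized-eigenspace signatures. This is precisely the difficulty the paper's Duistermaat-formula computation sidesteps; the only hypothesis it needs beyond invertibility of $C$ is transversality of $\Delta$ and $\Gr(\Phi)$, which is also the only honest way to read the boundary crossing you allow at $t=1$.
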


\begin{proof}
We first invoke \cite[Theorem 3.5]{robbin-salamon}
again or \cite[ Formula 3.3.7]{hoermander} to get that
\begin{equation}\label{hoe1}
s\big(L \times L,\Delta;\Delta,\Gr(\Phi)\big)=
-s\big(\Delta,\Gr(\Phi);L \times L, \Delta\big).
\end{equation}
By \cite[Formula 2.10]{duistermaat} the H\"ormander index can be
computed as 
\begin{equation}\label{hoe2}
s\big(\Delta,\Gr(\Phi);L \times L,
\Delta\big)=\frac{1}{2}\Big(\sign Q\big(\Delta,\Gr(\Phi);L
\times L\big)-
\sign Q\big(\Delta,\Gr(\Phi);\Delta\big)\Big).
\end{equation}
If $W$ is a Lagrangian subspace of $(V\times V,\Omega)$ with
$\Omega=-\omega \times \omega$, then the quadratic form
$Q\big(\Delta,\Gr(\Phi);W\big)$ is defined as follows. Since
the Lagrangians $\Delta$ and $\Gr(\Phi)$ are transverse by
assumption there exists a linear map $\Gamma \colon \Delta \to
\Gr(\Phi)$ such that
$$
W=\{z+\Gamma z: z \in \Delta\}.
$$
We define\footnote{Our conventions differ from those of Duistermaat \cite{duistermaat}: the Maslov cycle is oriented differently.}
$$
Q(\Delta,\Gr(\Phi);W) \colon \Delta \times \Delta \to
\mathbb{R}
$$ 
by the formula
$$
(z,z') \mapsto \Omega(z,\Gamma z').
$$
Note that since $W$ is Lagrangian, the form $Q$ is actually
symmetric. We immediately see that
$Q(\Delta,\Gr(\Phi);\Delta)$ vanishes and therefore
\begin{equation}\label{hoe3}
\sign Q(\Delta,\Gr(\Phi);\Delta)=0.
\end{equation}
It therefore remains to compute $Q(\Delta,\Gr(\Phi); L\times
L)$. For this purpose we pick $z=(u,u) \in \Delta$ with $u=(u_1,u_2)
\in \mathbb{R}^n \times \mathbb{R}^n=\mathbb{R}^{2n}$. We define
$v(u) \in \mathbb{R}^{2n}$ implicitly by the condition that
$$
\big(u+v(u),u+\Phi(v(u))\big) \in L \times L.
$$
To obtain an explicit formula for the vector $v(u)$ we decompose
$v(u)=(v_1(u),v_2(u)) \in \mathbb{R}^n \times
\mathbb{R}^n=\mathbb{R}^{2n}$. The condition that $u+v(u) \in L$
immediately implies that
$$
v_2=-u_2.
$$
To meet the requirement $u+\Phi(v(u)) \in L$ we obtain the equation
$$
0=u_2+Cv_1+Dv_2=u_2+Cv_1-A^Tu_2
$$
and therefore
$$
v_1=C^{-1}(A^T-\id)u_2=(A-\id)C^{-1}u_2.
$$
Summarizing we obtained
\begin{equation}\label{hoe4}
v(u)=\big((A-\id)C^{-1}u_2,-u_2\big).
\end{equation}
The map $\Gamma \colon \Delta \to \Gr(\Phi)$ is given by
$$
\Gamma(u,u)=\big(v(u),\Phi(v(u))\big).
$$
We compute
$$
\Phi(v(u))=\left(\begin{array}{cc}
A & B\\
C & A^T
\end{array}\right)\left(\begin{array}{c}
(A-\id)C^{-1}u_2\\
-u_2
\end{array}\right)=\left(\begin{array}{c}
\big(A(A-\id)C^{-1}-B\big)u_2\\
-u_2 \end{array}\right).
$$ 
To simplify the first factor we derive
$$
A(A-\id)C^{-1}-B=(BC+\id)C^{-1}-AC^{-1}-B
=(\id-A)C^{-1}.
$$ 
Hence we get
\begin{equation}\label{hoe5}
\Phi(v(u))=\left(\begin{array}{c}
(\id-A)C^{-1}u_2\\
-u_2
\end{array}\right).
\end{equation}
Let $\langle \cdot,\cdot \rangle$ denote the standard inner product
on $\mathbb{R}^n$. 
From (\ref{hoe4}) and (\ref{hoe5}) we obtain the following expression for the
symmetric form $Q=Q\big(\Delta,\Gr(\Phi);L \times L\big)
\colon \Delta \times \Delta \to \mathbb{R}$ if we insert the vectors $z=(u,u),
z'=(u',u') \in \Delta$,
\begin{eqnarray*}
Q(z,z')&=&\Omega(z,\Gamma z')\\
&=&\Big\langle u_2, (A-\id)C^{-1}  u_2'
\Big\rangle+\Big\langle u_1,u_2' \Big\rangle-\Big\langle
u_2,(\id-A)C^{-1} u_2'\Big\rangle-\Big\langle
u_1,u_2'\Big\rangle\\
&=&2\Big\langle u_2,(A-\id)C^{-1}u_2'\Big\rangle.
\end{eqnarray*}
In particular, we obtain
\begin{equation}\label{hoe6}
\sign Q\big(\Delta,\Gr(\Phi);L \times L\big)=
\sign \Big((A-\id)C^{-1}\Big).
\end{equation}
Combining equations (\ref{hoe1}), (\ref{hoe2}), (\ref{hoe3}), and
(\ref{hoe6}) the lemma follows. \hfill $\square$
\\ \\
The proof of Theorem~\ref{main} is now immediate. For $k=1$ the
theorem follows from Lemma~\ref{invert} and Lemma~\ref{mainlem}. The
general case follows from the case $k=1$ by using the iteration
formula from Lemma~\ref{iterate}. This finishes the proof of the
theorem.
\end{proof}

\appendix

\section{Identities for Chebyshev polynomials}
The Chebyshev polynomials have many remarkable properties. Some identities are particularly relevant for us.
\begin{lemma}
Let $a=\cos \alpha$. Then
\[
\begin{split}
T_n(a)&=\cos n\alpha \\
U_{n}(a)&=\frac{\sin (n+1) \alpha}{\sin \alpha}
\end{split}
\]
\end{lemma}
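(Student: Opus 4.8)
The plan is to prove the two identities simultaneously by induction on $n$, using the defining three-term recursions together with the elementary product-to-sum formulas $2\cos\alpha\cos k\alpha=\cos(k+1)\alpha+\cos(k-1)\alpha$ and $2\cos\alpha\sin(k+1)\alpha=\sin(k+2)\alpha+\sin k\alpha$. First I would dispose of the base cases: $T_0(a)=1=\cos 0$ and $T_1(a)=a=\cos\alpha$, while $U_0(a)=1=\sin\alpha/\sin\alpha$ and $U_1(a)=2a=2\cos\alpha=\sin 2\alpha/\sin\alpha$, the last equality being the double-angle formula.

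For the inductive step on the first-kind polynomials, I would substitute the inductive hypotheses $T_k(a)=\cos k\alpha$ and $T_{k-1}(a)=\cos(k-1)\alpha$ into $T_{k+1}(a)=2aT_k(a)-T_{k-1}(a)$, obtaining $2\cos\alpha\cos k\alpha-\cos(k-1)\alpha$; the product-to-sum formula turns the first term into $\cos(k+1)\alpha+\cos(k-1)\alpha$, and the $\cos(k-1)\alpha$ contributions cancel, leaving $\cos(k+1)\alpha$ as required. The second-kind case is handled in exactly the same way: inserting $U_k(a)=\sin(k+1)\alpha/\sin\alpha$ and $U_{k-1}(a)=\sin k\alpha/\sin\alpha$ into $U_{k+1}(a)=2aU_k(a)-U_{k-1}(a)$ gives the quotient $\big(2\cos\alpha\sin(k+1)\alpha-\sin k\alpha\big)/\sin\alpha$, and the second product-to-sum identity collapses the numerator to $\sin(k+2)\alpha$, yielding $U_{k+1}(a)=\sin(k+2)\alpha/\sin\alpha$ and completing the induction.

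Finally I would address the degenerate values $\alpha\in\pi\mathbb{Z}$, where $\sin\alpha=0$ and the expression $\sin(n+1)\alpha/\sin\alpha$ has to be interpreted as its limiting value. Since $U_n$ is a polynomial in $a$ and the right-hand side extends continuously to $a=\pm 1$ (with limiting values $n+1$ at $a=1$ and $(-1)^n(n+1)$ at $a=-1$, by differentiating numerator and denominator), the identity persists there by continuity; alternatively one checks $U_n(\pm 1)$ directly from the recursion. I expect no genuine obstacle here: apart from keeping the signs in the product-to-sum formulas straight and noting the boundary case $\sin\alpha=0$, the argument is a routine two-line induction.
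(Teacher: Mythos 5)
Your proof is correct and follows essentially the same route as the paper: induction on $n$ using the defining recursions and elementary trigonometric addition formulas (you use the product-to-sum identities where the paper rearranges via double-angle formulas, a cosmetic difference). Your extra remark about the degenerate case $\sin\alpha=0$ is a sensible precaution the paper leaves implicit, but it does not change the argument.
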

\begin{proof}
To see this, use induction: for $n=0,1$, the identities hold true. Then we compute
\[
\begin{split}
T_{n+1}(a)&=2 \cos \alpha  T_{n}(\cos \alpha)- T_{n-1}(\cos \alpha ) =2 \cos \alpha \cos n \alpha -\cos(n-1) \alpha \\
&=\cos 2 \alpha  \cos (n-1) \alpha -\sin 2\alpha \sin (n-1) \alpha = \cos(n+1)\alpha,
\\
U_{n+1}(a)&=2 \cos \alpha \frac{\sin (n+1) \alpha}{\sin \alpha}-\frac{\sin n \alpha}{\sin \alpha}=\frac{1}{\sin \alpha}
\left(
2 \cos^2 \alpha \sin n\alpha -\sin n\alpha +2 \cos \alpha \sin \alpha \cos n \alpha
\right)
\\
&=\frac{1}{\sin \alpha}\left(
\sin n\alpha \cos 2\alpha+ \cos n\alpha \sin 2\alpha
\right)
=\frac{\sin(n+2) \alpha}{\sin \alpha}.
\end{split}
\]
\end{proof}
From here, it is also straightforward to derive the mutual recurrence relations that appear in the proof of Lemma~\ref{lemma:iteration_formula}. These are given by
\begin{lemma}[Mutual recursion formulas]
\label{lemma:mutual_recursion}
The Chebyshev polynomials satisfy
\[
\begin{split}
T_{n+1}(x)&= x T_n(x)-(1-x^2)U_{n-1}(x) \\
U_{n}(x) &=xU_{n-1}+T_n(x).
\end{split}
\]
\end{lemma}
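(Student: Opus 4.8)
The plan is to derive both identities from the trigonometric representation in the preceding lemma rather than to argue directly from the defining three-term recursions. Writing $x=\cos\alpha$ with $0<\alpha<\pi$, that lemma gives $T_n(x)=\cos n\alpha$ and, after shifting the index by one, $U_{n-1}(x)=\dfrac{\sin n\alpha}{\sin\alpha}$ for $n\ge 1$.

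First I would verify the recursion for $T$. Since $1-x^2=\sin^2\alpha$, the right-hand side of the first identity equals $\cos\alpha\cos n\alpha-\sin^2\alpha\cdot\dfrac{\sin n\alpha}{\sin\alpha}=\cos\alpha\cos n\alpha-\sin\alpha\sin n\alpha=\cos(n+1)\alpha=T_{n+1}(x)$, using the addition theorem for the cosine. Next, the right-hand side of the second identity equals $\cos\alpha\cdot\dfrac{\sin n\alpha}{\sin\alpha}+\cos n\alpha=\dfrac{\cos\alpha\sin n\alpha+\sin\alpha\cos n\alpha}{\sin\alpha}=\dfrac{\sin(n+1)\alpha}{\sin\alpha}=U_n(x)$, using the addition theorem for the sine.

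Both sides of each claimed identity are polynomials in $x$, and the computation above shows that they agree at every $x$ of the form $\cos\alpha$ with $\alpha\in(0,\pi)$, that is, on the infinite set $(-1,1)$; hence the polynomial identities hold. The degenerate index $n=0$ in the first formula can be absorbed by the usual convention $U_{-1}=0$, or checked by hand from $T_1=x$ and $T_2=2x^2-1$; in any case it is not needed for the application in Lemma~\ref{lemma:iteration_formula}, where the mutual recursions are invoked only for $n=k\ge 1$.

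There is essentially no obstacle here: the only point to watch is the index shift between $U_n$ and $\sin(n+1)\alpha/\sin\alpha$, so that the second formula is applied at the correct level. If one preferred a proof not passing through trigonometry, one could instead proceed by induction on $n$: establish $U_n=xU_{n-1}+T_n$ directly from the defining recursions, and then observe that the first identity is equivalent to it after substituting $T_{n+1}+T_{n-1}=2xT_n$. I would present the trigonometric version as the main line, since the preceding lemma reduces the whole argument to two short computations.
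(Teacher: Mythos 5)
Your proof is correct and is essentially the paper's argument: the paper also derives both identities from the trigonometric representation $T_n(\cos\alpha)=\cos n\alpha$, $U_{n-1}(\cos\alpha)=\sin n\alpha/\sin\alpha$ by applying the sum formulas for $\cos$ and $\sin$. Your added remark that agreement on $(-1,1)$ suffices for the polynomial identity, and your care with the index shift, are fine but only make explicit what the paper leaves implicit.
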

Indeed, one can simply apply the standard sum formulas for $\cos$ and $\sin$.

\end{document}